\theoremstyle{proclaim}
\newtheorem{theorem}{Theorem}[section]
\newtheorem{lemma}[theorem]{Lemma}
\newtheorem{proposition}[theorem]{Proposition}
\theoremstyle{remark}
\newcommand{\eps}{\epsilon}
\newcommand{\cball}[1]{B_{#1}}
\newcommand{\dual}{\sp{\ast}}
\DeclareMathOperator{\diam}{{diam}}
\numberwithin{theorem}{section}
\numberwithin{equation}{section}
\begin{document}
\title{Szlenk and $w^\ast$-dentability indices of the Banach spaces $C([0,\,\alpha])$}
\author{Philip A.H. Brooker\footnote{The author gratefully acknowledges the financial support of a Lift-Off Fellowship from the Australian Mathematical Society. This research was undertaken whilst the author was a Visiting Fellow at the Mathematical Sciences Institute of the Australian National University.} \footnote{E-mail address: philip.a.h.brooker@gmail.com}}

\date{February 24, 2012} 
\maketitle

\begin{abstract}Let $\alpha$ be an infinite ordinal and $\gamma$ the unique ordinal satisfying $\omega^{\omega^\gamma}\leq \alpha < \omega^{\omega^{\gamma+1}}$. We show that the Banach space $C([0,\,\alpha])$ of all continuous scalar-valued functions on the compact ordinal interval $[0,\,\alpha]$ has Szlenk index equal to $\omega^{\gamma+1}$ and $w^\ast$-dentability index equal to $\omega^{1+\gamma+1}$.\end{abstract}

\section{Introduction}\label{secone}
The Szlenk index is an ordinal-valued isomorphic invariant of a Banach space that was introduced in \cite{Szlenk1968}. There it is used to show that the class of separable, reflexive Banach spaces contains no universal element, thereby solving a problem posed by Banach and Mazur in the Scottish Book. Since then the Szlenk index has found a variety of uses in the study of Banach space geometry, a survey of which can be found in \cite{Lancien2006}. One of the main applications of the Szlenk index is in the study of $C(K)$ spaces and their operators, as witnessed in particular by the work of Alspach \cite{Alspach1978}, Alspach and Benyamini \cite{Alspach1979}, Benyamini \cite{Benyamini1978}, Bourgain \cite{Bourgain1979} and Gasparis \cite{Gasparis2005}; we refer to the survey article \cite{Rosenthal2003} for a detailed discussion of this topic. The purpose of the current paper is to enlarge the class of $C(K)$ spaces for which the Szlenk index of $C(K)$ is known. We shall also discuss the related $w^\ast$-dentability index for the same class of $C(K)$ spaces.

It is a classical result of Mazurkiewicz and Sierpinski \cite{MS} that every countable, compact Hausdorff space is homeomorphic to an ordinal interval $[0,\,\alpha]$ equipped with its order topology, for some $\alpha<\omega_1$. The linear isomorphic classification of $C(K)$ spaces with $K$ countable is due to Bessaga and Pe{\l}czy{\'n}ski \cite{Bessaga1960}, who showed that for ordinals $\omega \leq \alpha <\beta <\omega_1$, $C([0,\,\alpha])$ is isomorphic to $C([0,\,\beta])$ if and only if $\beta<\alpha^\omega$. In particular, it follows that each $C(K)$ space with $K$ countable is isomorphic to the space $C([0,\,\omega^{\omega^\gamma}])$ for a unique countable ordinal $\gamma$. The computation of the Szlenk indices of the Banach spaces $C(K)$ with $K$ countable is due to Samuel \cite{Samuel1984}; drawing upon deep results of Alspach and Benyamini \cite{Alspach1979}, Samuel showed that the Szlenk index of $C([0,\,\omega^{\omega^\gamma}])$ is $\omega^{\gamma+1}$ for each countable ordinal $\gamma$.

The first extension of Samuel's result was achieved by Lancien \cite{Lancien1996}, who used Samuel's result and a separable-reduction argument to show that if $K$ is a (scattered) compact Hausdorff space of countable height, then the Szlenk index of $C(K)$ is $\omega^{\gamma+1}$, where $\gamma$ is the unique ordinal such that the height of $K$ belongs to the ordinal interval $[\omega^\gamma, \, \omega^{\gamma+1})$. H{\'a}jek and Lancien later gave in \cite{H'ajek2007} a `direct' proof of Samuel's result, the existence of which was conjectured by Rosenthal in \cite{Rosenthal2003}; in particular, they computed the Szlenk indices of the Banach spaces $C([0,\,\alpha])$ for ordinals $\alpha<\omega_1\omega$ without appeal to Alspach and Benyamini's results from  \cite{Alspach1979}. Their result says that for $\omega\leq \alpha<\omega_1\omega$ the Szlenk index of $C([0,\,\alpha])$ is $\omega^{\gamma+1}$, where $\gamma$ is the unique ordinal satisfying $\omega^{\omega^\gamma}\leq \alpha<\omega^{\omega^{\gamma+1}}$. As the Szlenk index of $c_0(X)$ coincides with the Szlenk index of $X$ for every infinite dimensional Banach space $X$, it follows easily that the statement of H{\'a}jek and Lancien's result holds in fact for all ordinals $\alpha<\omega_1\omega^\omega$ (see \cite[p.2232]{Brookera} for details.)

The main purpose of the current paper is to determine the Szlenk index of $C([0,\,\alpha])$ for $\alpha$ an arbitrary ordinal. In particular, we shall extend the previous results of Samuel and H{\'a}jek-Lancien, showing that for $\alpha\geq \omega$ the Szlenk index of $C([0,\,\alpha])$ is $\omega^{\gamma+1}$, where $\gamma$ is the unique ordinal satisfying $\omega^{\omega^\gamma}\leq \alpha<\omega^{\omega^{\gamma+1}}$ (Theorem~\ref{stpip}). The computation by H{\'a}jek and Lancien of Szlenk indices of the spaces $C([0,\,\alpha])$, $\alpha<\omega_1$, makes use of the aforementioned isomorphic classification of the spaces $C([0,\,\alpha])$, $\alpha<\omega_1$, by Bessaga and Pe{\l}czy{\'n}ski. That the statement of the Bessaga-Pe{\l}czy{\'n}ski classification theorem does not hold in general for the spaces $C([0,\,\alpha])$ when $\alpha\geq\omega_1$ is the reason that the argument of H{\'a}jek and Lancien does not yield the Szlenk indices of the spaces $C([0,\,\alpha])$ for arbitrary $\alpha$. In the current paper we avoid an appeal to the Bessaga-Pe{\l}czy{\'n}ski theorem, working instead through decompositions of the spaces $C([0,\,\alpha])$ into $c_0$-direct sums of smaller spaces of continuous functions on compact ordinals (cf. Lemma~\ref{bp}) and isomorphisms of such $c_0$-direct sums (cf. Lemma~\ref{pb}).

In Section~\ref{secthree} we shall outline how the techniques developed in \cite{H'ajek2009} can be combined with the arguments used in the proof of Theorem~\ref{stpip} of the current paper to show that for $\alpha$ and $\gamma$ as in the preceding paragraph, the $w^\ast$-dentability index of $C([0,\,\alpha])$ is $\omega^{1+\gamma+1}$ (Theorem~\ref{dentthm}).

We now detail most of the necessary terminology and background results for the current paper. As usual, $\omega$ denotes the first infinite ordinal and $\omega_1$ denotes the first uncountable ordinal. For $K$ a compact Hausdorff space, $C(K)$ is the Banach space of continuous scalar-valued functions on $K$, equipped with the supremum norm. For $\alpha$ an ordinal, the ordinal interval $[0,\,\alpha]$ is a compact Hausdorff space when equipped with its order topology. We denote by $C_0([0,\,\alpha])$ the closed subspace of $C([0,\,\alpha])$ consisting of all elements of $C([0,\,\alpha])$ that vanish at $\alpha$. It is well-known and easy to show that $C_0([0,\,\alpha])$ is isomorphic to $C([0,\,\alpha])$ whenever $\alpha\geq\omega$. For ordinals $\xi \leq \alpha$ and $f\in C_0([0,\,\xi])$, we define $f_{\xi,\,\alpha}\in C_0([0,\,\alpha])$ by setting
\[
f_{\xi,\,\alpha}(\zeta)= \begin{cases}
f(\zeta)& \text{if $\zeta\leq \xi$},\\
0& \text{if $\zeta >\xi$,}
\end{cases}\quad 0\leq \zeta \leq\alpha\, .
\]
It is clear that the operator $J_{\xi,\,\alpha}: C_0([0,\,\xi])\longrightarrow C_0([0,\,\alpha]): f\mapsto f_{\xi,\,\alpha}$ is an isometric linear embedding of $C_0([0,\,\xi])$ into $C_0([0,\,\alpha])$.

For a Banach space $X$ we write $\cball{X}$ for the set $\{ x\in X\mid \Vert x\Vert\leq 1\}$. If $Y$ is a Banach space that is isomorphic to $X$, we write $X\approx Y$. If $S$ is a nonempty set, $c_0(S, \, X)$ is defined to be the linear space
\[
\{ f: S\longrightarrow X \mid \{ s\in S \mid \Vert f(s)\Vert >\eps\} \mbox{ is finite for every }\eps>0\}
\]
which we equip with the complete norm $\Vert f\Vert := \sup\{ \Vert f(s)\Vert\mid s \in S\}$. For a nonempty subset $R\subseteq S$, we denote by $U_R$ the canonical isometric linear embedding of $c_0(R, \, X)$ into $c_0(S, \, X)$. The dual space $c_0(S,\, X)\dual$ is naturally identified via isometric linear isomorphism with the Banach space $\ell_1(S,\,X\dual)$.

The Szlenk index is defined as follows. Let $X$ be an Asplund space and $B \subseteq X\dual$. Define
\begin{equation*}
s_\eps(B) = \{x \in B \mid \diam (B \cap V)> \eps \mbox{ for every } w\dual \mbox{-open }V\ni x\}\,.
\end{equation*} We iterate $s_\eps$ transfinitely as follows: $s_\eps^0(B) = B$, $ s_\eps^{\beta+1}(B)= s_\eps(s_\eps^\beta(B))$ for each ordinal $\beta$ and $ s_\eps^\beta(B) = \bigcap_{\sigma< \beta} s_\eps^\sigma(B)$ whenever $\beta $ is a limit ordinal. 

The \emph{$\eps$-Szlenk index of $X$}, denoted ${Sz}(X, \, \eps)$, is the first ordinal $\beta$ such that $s_\eps^\beta(\cball{X\dual}) = \emptyset$. The \emph{Szlenk index of $X$} is the ordinal $Sz(X):=\sup_{\eps >0}Sz(X, \, \eps)$. Note that $Sz(X,\,\eps)$ exists for every Asplund space $X$ and $\eps > 0$ by following well-known characterisation of Asplund spaces: a Banach space is Asplund if and only if every bounded subset of its dual admits $w\dual$-open slices of arbitrarily small norm diameter \cite[Theorem~5.2]{Deville1993}. The ordinal index $Sz(X)$ is thus defined for every Asplund space $X$, and the definition cannot be extended beyond the class of Asplund spaces. It is worth noting that the definition of the Szlenk index used in the current paper (and many others) differs from that introduced by Szlenk in \cite{Szlenk1968}, however the two definitions give the same index on separable Banach spaces containing no copy of $\ell_1$.

The following proposition collects some basic facts regarding the Szlenk index.
\begin{proposition}\label{collection}
Let $X$ and $Y$ be Asplund spaces.
\begin{itemize}
\item[(i)] If $X$ is isomorphic to a subspace of $Y$, then $ Sz(X)\leq Sz(Y)$. In particular, the Szlenk index is an isomorphic invariant of an Asplund space.

\item[(ii)] If $\gamma$ is an ordinal and $\eps >0$ is such that $Sz(X, \, \eps)>\omega^\gamma$, then $Sz(X)\geq \omega^{\gamma+1}$. It follows that $Sz(X)=\omega^\alpha$ for some ordinal $\alpha$.

\item[(iii)] $Sz(X)=1$ if and only $\dim (X)<\infty$.
\end{itemize}
\end{proposition}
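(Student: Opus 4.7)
The three parts are essentially independent, and I would treat them in turn. First, for (i), let $T:X\to Y$ be an isomorphism onto its image. The adjoint $T\dual:Y\dual\to X\dual$ is $w\dual$-to-$w\dual$ continuous, and by Hahn--Banach we have $T\dual(\cball{Y\dual})\supseteq c\cdot \cball{X\dual}$ with $c=1/\|T^{-1}\|$. I would prove by transfinite induction on $\beta$ that $c\cdot s_\eps^\beta(\cball{X\dual})\subseteq s_{c\eps}^\beta(T\dual(\cball{Y\dual}))$, using at the successor step that $y\mapsto cy$ is a $w\dual$-homeomorphism which scales norm distances by $c$, and at the limit step that scaling commutes with intersections. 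Consequently, the emptiness of $s_{c\eps}^\beta(\cball{Y\dual})$ forces that of $s_\eps^\beta(\cball{X\dual})$, yielding $Sz(X,\eps)\leq Sz(Y,c\eps)\leq Sz(Y)$, and the supremum over $\eps>0$ gives $Sz(X)\leq Sz(Y)$. For (iii), when $\dim X<\infty$ the $w\dual$ and norm topologies on $\cball{X\dual}$ agree, so $s_\eps(\cball{X\dual})=\emptyset$ for every $\eps>0$; when $\dim X=\infty$, every nonempty $w\dual$-open slice of $\cball{X\dual}$ has norm diameter $2$ (a standard consequence of Goldstine's theorem), so $s_\eps(\cball{X\dual})=\cball{X\dual}$ for $\eps<2$ and hence $Sz(X)\geq 2$.

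The real substance of the proposition is (ii). As a preliminary, the $w\dual$-compactness of $\cball{X\dual}$ means that at a limit ordinal $\beta$ the set $s_\eps^\beta(\cball{X\dual})$ is a decreasing intersection of nonempty $w\dual$-compacta, hence nonempty; so $Sz(X,\eps)$ is always a successor ordinal. The crux is then a self-similarity / doubling lemma of the form: if $s_\eps^\alpha(\cball{X\dual})\neq\emptyset$, then for each $n\in\nat$ there exists $\delta_n>0$ with $s_{\delta_n}^{\alpha\cdot n}(\cball{X\dual})\neq\emptyset$. I would prove it by exploiting the affine self-similarity of $\cball{X\dual}$: for every $x\dual\in\cball{X\dual}$ the set $\tfrac12 x\dual+\tfrac12\cball{X\dual}$ is a scaled translate of $\cball{X\dual}$ sitting inside it, and the derivation commutes with affine homeomorphisms up to a corresponding rescaling of $\eps$. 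Picking $x\dual\in s_\eps^\alpha(\cball{X\dual})$ and iterating the construction $n$ times yields a nested tower inside which $n$ successive blocks of $\alpha$-many derivations can be stacked, at the cost of halving $\eps$ at each stage.

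Applying the doubling lemma with $\alpha=\omega^\gamma$ and the hypothesis $Sz(X,\eps)>\omega^\gamma$ gives $Sz(X,\delta_n)>\omega^\gamma\cdot n$ for every $n$, whence
\[ Sz(X)=\sup_{\delta>0}Sz(X,\delta)\;\geq\;\sup_n\omega^\gamma\cdot n\;=\;\omega^{\gamma+1}. \]
The final clause of (ii) is then immediate: choosing $\alpha$ with $\omega^\alpha\leq Sz(X)<\omega^{\alpha+1}$, strict inequality on the left would force $Sz(X)\geq\omega^{\alpha+1}$, contradicting the choice of $\alpha$. The principal technical obstacle is the doubling lemma itself: arranging the transfinite induction on $\alpha$ so that the halving of $\eps$, the affine self-similarity of $\cball{X\dual}$, and the monotonicity of the derivation interact cleanly — without imposing extraneous hypotheses on $X$ — is where the genuine work of the proposition lies.
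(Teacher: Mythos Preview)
The paper does not actually prove this proposition: it sends the reader to \cite[\S2.4]{H'ajek2008} for (i) and (ii) and declares (iii) elementary. Your sketches for (ii) and (iii) are sound and are essentially the arguments one finds in that reference --- the affine self-similarity / ``doubling'' lemma is precisely the engine behind (ii), and your (iii) is the intended elementary verification (a minor quibble: Goldstine is not what is used; one simply observes that any basic $w\dual$-neighbourhood of $0$ in $X\dual$ contains an entire line when $\dim X=\infty$).

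There is, however, a genuine gap in your treatment of (i). The inclusion
\[
c\cdot s_\eps^\beta(\cball{X\dual})\subseteq s_{c\eps}^\beta\big(T\dual(\cball{Y\dual})\big)
\]
is a statement about two subsets of $X\dual$, with both derivations computed in $X\dual$; it carries no information whatsoever about the derivation $s_{c\eps}^\beta(\cball{Y\dual})$ taken in $Y\dual$. Your sentence ``the emptiness of $s_{c\eps}^\beta(\cball{Y\dual})$ forces that of $s_\eps^\beta(\cball{X\dual})$'' therefore does not follow from what precedes it --- the argument as written never leaves $X\dual$, so it cannot bound $Sz(X)$ by $Sz(Y)$. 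What is actually required is an inclusion of the form
\[
s_{\eps'}^\beta(\cball{X\dual})\;\subseteq\;T\dual\big(s_\eps^\beta(\cball{Y\dual})\big)
\]
for a suitable $\eps'$, and the successor step of \emph{that} induction needs a $w\dual$-compactness argument on the fibre $\,(T\dual)^{-1}(x\dual)\cap s_\eps^\beta(\cball{Y\dual})$: cover it by finitely many $w\dual$-open sets of small diameter in $Y\dual$, push their images down to $X\dual$, and use that each image has small diameter and contains $x\dual$. This fibre argument --- not the scaling identity you describe --- is where the work in (i) lies, and it is what the cited reference supplies.
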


Details of the proofs of assertions (i) and (ii) of Proposition~\ref{collection} can be found in \cite[\S2.4]{H'ajek2008}. Verification of (iii) is elementary.

The characterisation of those compact Hausdorff spaces $K$ for which $C(K)$ is an Asplund space is due to Namioka and Phelps; they showed in \cite{Namioka1975} that a Banach space $C(K)$ is Asplund if and only if $K$ is scattered. As every ordinal interval $[0,\,\alpha]$ is scattered and compact when equipped with its order topology, the spaces $C([0,\,\alpha])$ are Asplund spaces and their Szlenk index is defined. Information regarding topological properties of ordinals can be found in, e.g., \cite[\S8.6]{Semadeni1971}.

Important to our analysis of the spaces $C([0,\,\alpha])$ and their duals is the fact that for a scattered, compact Hausdorff space $K$, the dual space $C(K)\dual$ is naturally identified with $\ell_1(K)$; this is due to Rudin \cite[Theorem~6]{Rudin1957}. The dual of $C_0([0,\,\alpha])$ is naturally identified with $\ell_1([0,\,\alpha))$. 

\section{The Szlenk index of $C([0,\,\alpha])$}\label{sectwo}
We begin our computations of Szlenk indices by gathering some preliminary results that we shall need. The first such result is the following proposition that provides a way to obtain an upper estimate of the Szlenk index of a Banach space.
\begin{proposition}[\cite{H'ajek2007}]\label{upest}
Let $X$ be a Banach space and $\eta$ an ordinal. Assume that
\[
\forall\eps>0\quad \exists\delta(\eps)\in(0,\,1) \quad s_\eps^\eta (\cball{X\dual}) \subseteq (1-\delta(\eps))\cball{X\dual}\, .
\]
Then
\[
Sz(X)\leq \eta\omega\, .
\]
\end{proposition}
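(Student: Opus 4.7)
The plan is, for each $\eps > 0$, to show that $s_\eps^{\eta\omega}(\cball{X\dual}) = \emptyset$; since $Sz(X) = \sup_{\eps > 0} Sz(X, \eps)$, this immediately yields $Sz(X) \leq \eta\omega$. The guiding intuition is that the hypothesis asserts that a single block of $\eta$ derivations contracts the dual ball by a fixed factor $1 - \delta(\eps) < 1$, so iterating the block $n$ times ought to shrink the dual ball geometrically, and once the radius of the remaining set drops below $\eps/2$ a single further derivation annihilates it.

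Three elementary, well-known properties of the derivation $s_\eps$ drive the argument. (i) \emph{Monotonicity under inclusion}: $A \subseteq B \subseteq X\dual$ implies $s_\eps(A) \subseteq s_\eps(B)$, directly from the definition, and this iterates to every ordinal stage $s_\eps^\beta$. (ii) \emph{Positive homogeneity}: $s_\eps(\lambda B) = \lambda\, s_{\eps/\lambda}(B)$ for $\lambda > 0$, using that multiplication by $\lambda$ is a $w\dual$-homeomorphism; this iterates to $s_\eps^\beta(\lambda B) = \lambda\, s_{\eps/\lambda}^\beta(B)$. (iii) \emph{Monotonicity in $\eps$}: $s_{\eps_2}(B) \subseteq s_{\eps_1}(B)$ whenever $0 < \eps_1 \leq \eps_2$, since a larger diameter demand is more restrictive. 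I also use the trivial observation that if $\diam(B) \leq \eps$ then $s_\eps(B) = \emptyset$.

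Fixing $\eps > 0$ and writing $\delta := \delta(\eps)$, the core step is to prove by induction on $n \in \omega$ that
\[
s_\eps^{\eta n}(\cball{X\dual}) \subseteq (1-\delta)^n\, \cball{X\dual}.
\]
The case $n = 0$ is trivial and $n = 1$ is the hypothesis. For the step, using $\eta(n+1) = \eta n + \eta$ and the induction hypothesis together with (i) gives
\[
s_\eps^{\eta(n+1)}(\cball{X\dual}) \subseteq s_\eps^\eta\bigl((1-\delta)^n \cball{X\dual}\bigr);
\]
by (ii) this equals $(1-\delta)^n\, s_{\eps/(1-\delta)^n}^\eta(\cball{X\dual})$; by (iii) this is contained in $(1-\delta)^n\, s_\eps^\eta(\cball{X\dual})$; and the hypothesis reapplied bounds this by $(1-\delta)^{n+1}\cball{X\dual}$. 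Choosing $n$ so large that $2(1-\delta)^n \leq \eps$, the set $(1-\delta)^n \cball{X\dual}$ has diameter at most $\eps$, so one further derivation empties it: $s_\eps^{\eta n + 1}(\cball{X\dual}) = \emptyset$. Since $\eta n + 1 < \eta\omega$ for $\eta \geq 1$ (the degenerate $\eta = 0$ case forces $\cball{X\dual} \subseteq (1-\delta)\cball{X\dual}$, hence $X = \{0\}$ and $Sz(X) = 0$), this yields $Sz(X, \eps) \leq \eta\omega$ as required.

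The only non-routine point is the inductive step, where homogeneity rescales the parameter upward to $\eps/(1-\delta)^n$ and $\eps$-monotonicity must reabsorb it to bring matters back into a form where the hypothesis applies directly; everything else is bookkeeping and the elementary ordinal arithmetic $\eta n + 1 < \eta\omega$.
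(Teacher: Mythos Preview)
The paper does not prove this proposition; it is quoted as a known result from H\'ajek and Lancien \cite{H'ajek2007} and used as a black box. Your argument is correct and is exactly the standard proof: homogeneity rescales the derivation parameter, $\eps$-monotonicity brings it back down so the hypothesis applies again, and iterating yields the geometric contraction $s_\eps^{\eta n}(\cball{X\dual}) \subseteq (1-\delta)^n \cball{X\dual}$, after which one more derivation empties a set of diameter $\leq \eps$.

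One tiny quibble on the degenerate case $\eta=0$: under the paper's conventions $Sz(\{0\})=1$, not $0$ (the set $\{0\}$ requires one application of $s_\eps$ to become empty), so the conclusion $Sz(X)\leq \eta\omega=0$ actually fails there. This is irrelevant to any application---the proposition is only ever invoked with $\eta\geq 1$, where your proof is complete---but your parenthetical remark that $Sz(X)=0$ is not quite right.
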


\begin{lemma}\label{bp}
Let $\xi$ and $\zeta$ be ordinals satisfying $0<\zeta\leq\xi$ and $\omega\leq\xi$. Then $C_0([0,\,\xi\zeta])\approx C_0([0,\,\zeta])\oplus c_0(\zeta, \, C_0([0,\,\xi]))$.
\end{lemma}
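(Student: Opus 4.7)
The plan is to decompose $C_0([0,\,\xi\zeta]) \approx \mathrm{Im}\, P \oplus \ker P$ for a bounded projection $P$ whose image is isometric to $C_0([0,\,\zeta])$ and whose kernel is isomorphic to $c_0(\zeta,\, C_0([0,\,\xi]))$. Write $M := \{\xi\eta : 0 \leq \eta \leq \zeta\}$, a closed subset of $[0,\,\xi\zeta]$; since $\xi\eta = \sup_{\eta'<\eta}\xi\eta'$ at each limit $\eta$, the map $\eta \mapsto \xi\eta$ is a homeomorphism of $[0,\,\zeta]$ onto $M$, so $C_0(M) \approx C_0([0,\,\zeta])$, and restriction gives a norm-one operator $R \colon C_0([0,\,\xi\zeta]) \to C_0([0,\,\zeta])$. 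Define $\pi \colon [0,\,\xi\zeta] \to [0,\,\zeta]$ by $\pi(\alpha) := \min\{\eta \leq \zeta : \xi\eta \geq \alpha\}$ and set $E\varphi := \varphi \circ \pi$. Since $(E\varphi)(\xi\eta) = \varphi(\eta)$, one has $RE = \mathrm{id}$, so $P := ER$ is a bounded projection with image isometric to $C_0([0,\,\zeta])$ and kernel $N_0 := \{f \in C_0([0,\,\xi\zeta]) : f|_M = 0\}$.

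To identify $N_0 \approx c_0(\zeta,\, C_0([0,\,\xi]))$, observe that $\tau \mapsto \xi\eta + \tau$ is a homeomorphism $[0,\,\xi] \to I_\eta := [\xi\eta,\, \xi(\eta+1)]$. For $f \in N_0$ the restriction $f|_{I_\eta}$ vanishes at both endpoints of $I_\eta$, so corresponds to an element of $C_{00}([0,\,\xi]) := \{g \in C([0,\,\xi]) : g(0) = g(\xi) = 0\}$. The map $f \mapsto (f|_{I_\eta})_{\eta < \zeta}$ is an isometric linear bijection onto $c_0(\zeta,\, C_{00}([0,\,\xi]))$; the $c_0$-decay follows by a compactness argument, since any infinite set $\{\eta : \sup_{I_\eta} |f| > \eps\}$ would have an accumulation point $\eta_\infty \in [0,\,\zeta]$, and any sequence of witnesses $\alpha_n \in I_{\eta_n}$ would then converge to $\xi\eta_\infty \in M$, forcing $f(\alpha_n) \to 0$, a contradiction. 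To pass from $C_{00}([0,\,\xi])$ to $C_0([0,\,\xi])$, one uses that for $\xi \geq \omega$ the map $[1,\,\xi] \to [0,\,\xi]$ sending each finite ordinal to its predecessor (and fixing every infinite ordinal) is a homeomorphism, yielding $C_{00}([0,\,\xi]) \approx C_0([1,\,\xi]) \approx C_0([0,\,\xi])$.

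The main obstacle is verifying continuity of the retraction $\pi$, which requires a case analysis by the type of point in $[0,\,\zeta]$. For $\eta < \zeta$ one checks directly that $\pi^{-1}(\{\eta + 1\}) = (\xi\eta,\, \xi(\eta + 1)]$, which handles the isolated (successor) ordinals; for $\eta \leq \zeta$ a limit, $\pi^{-1}((\eta',\, \eta]) = (\xi\eta',\, \xi\eta]$ handles the basic neighbourhoods. In every case the preimage has the form $(a,\, b]$ with $a < b \leq \xi\zeta$, and such a set is open in $[0,\,\xi\zeta]$ because it coincides with the basic open interval $(a,\, b+1)$ when $b < \xi\zeta$ and with the basic endpoint-neighbourhood $(a,\, \xi\zeta]$ when $b = \xi\zeta$. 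With continuity of $\pi$ secured, the direct-sum decomposition $C_0([0,\,\xi\zeta]) \approx C_0([0,\,\zeta]) \oplus c_0(\zeta,\, C_0([0,\,\xi]))$ follows.
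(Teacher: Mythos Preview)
Your proof is correct and follows essentially the same approach as the paper: both split $C_0([0,\xi\zeta])$ into the subspace of functions that are constant on the blocks $(\xi\sigma,\,\xi(\sigma+1)]$ (isomorphic to $C_0([0,\zeta])$) and the complementary subspace of functions vanishing at the grid points $\xi\sigma$ (isomorphic to $c_0(\zeta,\,C_0([0,\xi]))$). You supply considerably more detail than the paper's sketch---an explicit continuous retraction $\pi$ to build the projection, a compactness argument for the $c_0$-decay, and the passage $C_{00}([0,\xi])\approx C_0([1,\xi])\approx C_0([0,\xi])$ via the shift homeomorphism---but these are exactly the ``routine'' verifications the paper omits.
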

Lemma~\ref{bp} is essentially noted by Bessaga and Pe{\l}czy{\'n}ski in their proof of \cite[2.4]{Bessaga1960}; for the sake of completeness, we give here the details of their sketch proof.
\begin{proof}
We may write $C_0([0,\,\xi\zeta])=Y\oplus Z$, where $Y$ consists of all elements of $C_0([0,\,\xi\zeta])$ that are constant on the intervals $(\xi \sigma,\,\xi(\sigma+1)]$, $0\leq \sigma<\zeta$, and $Z$ consists of all elements of $C_0([0,\,\xi\zeta])$ vanishing at the points $\xi\sigma$, $1\leq \sigma\leq \zeta$. The lemma then follows from the routine observation that $Y$ and $Z$ are isometrically isomorphic to $C_0([0,\,\zeta])$ and $ c_0(\zeta, \, C_0([0,\,\xi]))$ respectively.
\end{proof}

We have the following consequence of Lemma~\ref{bp}.
\begin{lemma}\label{pb}
Let $\gamma$ be an ordinal and $1<n<\omega$. Then \[ C_0([0,\,\omega^{\omega^\gamma n}])\approx c_0(\omega^{\omega^\gamma}, \, C_0([0,\,\omega^{\omega^\gamma}]))\, .\]
\end{lemma}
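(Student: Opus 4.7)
The plan is to prove the isomorphism by induction on $n\geq 2$, writing $\xi=\omega^{\omega^\gamma}$ for brevity so that the target statement becomes $C_0([0,\xi^n])\approx c_0(\xi, C_0([0,\xi]))$. The argument rests on two soft facts about $c_0$-sums which I will use repeatedly: (a) the space $c_0(S,X)$ depends on $S$ only through its cardinality, and more precisely $c_0(S, c_0(T,X))$ is isometrically isomorphic to $c_0(S\times T, X)$; and (b) whenever $S$ is infinite, $|S\times S|=|S|$, so $c_0(S,X)\approx c_0(S,X)\oplus c_0(S,X)$, and in particular $c_0(S,X)\approx X\oplus c_0(S,X)$ (absorbing any single copy of $X$ into one spare coordinate block).

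For the base case $n=2$, I would apply Lemma~\ref{bp} with $\zeta=\xi=\omega^{\omega^\gamma}$ (noting $\omega\leq\xi$), obtaining
\[ C_0([0,\omega^{\omega^\gamma\cdot 2}])\approx C_0([0,\xi])\oplus c_0(\xi, C_0([0,\xi])). \]
The absorption property in (b) then collapses the right-hand side to $c_0(\xi, C_0([0,\xi]))$, giving the claim for $n=2$.

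For the induction step from $n$ to $n+1$, I would apply Lemma~\ref{bp} again, this time with $\zeta=\xi=\omega^{\omega^\gamma}$ and with the role of ``$\xi$'' in the lemma played by $\xi^n=\omega^{\omega^\gamma n}$ (which satisfies $\zeta\leq\xi^n$ because $n\geq 1$). This yields
\[ C_0([0,\xi^{n+1}])\approx C_0([0,\xi])\oplus c_0\bigl(\xi,\, C_0([0,\xi^n])\bigr). \]
Substituting the inductive hypothesis inside the $c_0$-sum and applying (a) then (b) gives
\[ c_0\bigl(\xi, c_0(\xi, C_0([0,\xi]))\bigr) \approx c_0(\xi\times\xi, C_0([0,\xi])) \approx c_0(\xi, C_0([0,\xi])), \]
after which the stray $C_0([0,\xi])$ summand is absorbed as in the base case.

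The only subtlety worth flagging is the book-keeping distinction between ordinal and cardinal arithmetic: although $\xi\cdot\xi=\xi^2$ as an \emph{ordinal} is strictly larger than $\xi$, the underlying index set for the iterated $c_0$-sum has cardinality equal to $|\xi|$, which is what makes the collapse in (a)--(b) legitimate. I do not anticipate any genuine obstacle beyond this; the content of the lemma is essentially a tidy rewriting of two applications of Lemma~\ref{bp} modulo standard absorption identities for $c_0$-sums over infinite index sets.
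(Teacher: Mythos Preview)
Your proof is correct and follows essentially the same route as the paper: induction on $n$ with base case $n=2$, in each step applying Lemma~\ref{bp} with $\zeta=\omega^{\omega^\gamma}$ and the larger ordinal $\omega^{\omega^\gamma n}$, then collapsing the iterated $c_0$-sum and absorbing the stray summand. The only cosmetic difference is that the paper leaves the ``soft facts'' (a) and (b) implicit, whereas you state them explicitly.
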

\begin{proof}
We proceed via induction on $n$. For the case $n=2$, note that an application of Lemma~\ref{bp} with $\xi=\zeta = \omega^{\omega^\gamma}$ yields
\[
C_0([0,\,\omega^{\omega^\gamma 2}])\approx C_0([0,\,\omega^{\omega^\gamma}])\oplus c_0(\omega^{\omega^\gamma}, \, C_0([0,\,\omega^{\omega^\gamma}])) \approx c_0(\omega^{\omega^\gamma}, \, C_0([0,\,\omega^{\omega^\gamma}]))\, ,
\]
as desired. Similarly, if $C_0([0,\,\omega^{\omega^\gamma n}])\approx c_0(\omega^{\omega^\gamma}, \, C_0([0,\,\omega^{\omega^\gamma}]))$, then applying Lemma~\ref{bp} with $\zeta= \omega^{\omega^\gamma}$ and $\xi = \omega^{\omega^\gamma n}$ yields
\begin{align*}
C_0([0,\,\omega^{\omega^\gamma (n+1)}])
&\approx C_0([0,\,\omega^{\omega^\gamma}]) \oplus c_0(\omega^{\omega^\gamma}, \, C_0([0,\,\omega^{\omega^\gamma n}]))\\
&\approx C_0([0,\,\omega^{\omega^\gamma}]) \oplus c_0(\omega^{\omega^\gamma},\, c_0(\omega^{\omega^\gamma}, \, C_0([0,\,\omega^{\omega^\gamma}])))\\
&\approx C_0([0,\,\omega^{\omega^\gamma}]) \oplus c_0(\omega^{\omega^\gamma}, \, C_0([0,\,\omega^{\omega^\gamma}]))\\
&\approx c_0(\omega^{\omega^\gamma}, \, C_0([0,\,\omega^{\omega^\gamma}]))\,,
\end{align*}
which completes the proof.
\end{proof}

The last of the preliminary results that we shall require is the following generalisation of \cite[Lemma~3.3]{H'ajek2007}.

\begin{lemma}\label{HaLag}
Let $\alpha$, $\beta$ and $\xi$ be ordinals such that $\xi<\alpha$, let $S$ be a set, $\emptyset \subsetneq R\subseteq S$ and $\eps >0$. If $(z_s)_{s\in S}\in s_{3\eps}^\beta(\cball{c_0(S,\,C_0([0,\,\alpha]))\dual})$ and $\sum_{r\in R}\Vert  J_{\xi,\,\alpha}\dual z_r\Vert>1-\eps$, then $(J_{\xi,\, \alpha}\dual z_r)_{r\in R}\in s_\eps^\beta (B_{c_0(R,\,C_0([0,\,\xi]))\dual})$.
\end{lemma}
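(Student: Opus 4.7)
The plan is to proceed by transfinite induction on $\beta$, with the genuine content concentrated in the successor step. Throughout, I shall use the identifications $C_0([0,\alpha])\dual = \ell_1([0,\alpha))$ and $C_0([0,\xi])\dual = \ell_1([0,\xi))$; under these, $J_{\xi,\alpha}\dual$ is simply the restriction of coefficients to $[0,\xi)$. The base case $\beta = 0$ is immediate from the fact that $J_{\xi,\alpha}\dual$ is norm-non-increasing, which gives $\sum_{r \in R}\Vert J_{\xi,\alpha}\dual z_r\Vert \leq \sum_{r \in R}\Vert z_r\Vert \leq 1$ (note that the mass hypothesis is not even required here). The limit-ordinal step is immediate from the definition of $s_\eps^\beta$ as an intersection together with the inductive hypothesis.

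For the successor step, assume the result for $\beta$ and let $z := (z_s)_{s\in S} \in s_{3\eps}^{\beta+1}(\cball{c_0(S,C_0([0,\alpha]))\dual})$ satisfy $\sum_{r\in R}\Vert J_{\xi,\alpha}\dual z_r\Vert > 1 - \eps$. Let $T \colon c_0(S,C_0([0,\alpha]))\dual \to c_0(R,C_0([0,\xi]))\dual$ be the $w\dual$-to-$w\dual$ continuous adjoint operator $(y_s) \mapsto (J_{\xi,\alpha}\dual y_r)_{r \in R}$. For any $w\dual$-open $V' \ni Tz$, the preimage $V := T^{-1}(V')$ is a $w\dual$-open neighborhood of $z$. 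A key auxiliary observation is that the functional $\phi(y) := \sum_{r\in R}\Vert J_{\xi,\alpha}\dual y_r\Vert$ is $w\dual$-lower semicontinuous---it is the supremum of its finite subsums, and each summand is the $w\dual$-lsc dual norm composed with $J_{\xi,\alpha}\dual$ and $w\dual$-continuous coordinate-evaluation---so $W := \{y : \phi(y) > 1 - \eps\}$ is $w\dual$-open and contains $z$. Applying the definition of $s_{3\eps}^{\beta+1}$ to the $w\dual$-open neighborhood $V \cap W$ yields $u, v \in s_{3\eps}^\beta(\cball{c_0(S,C_0([0,\alpha]))\dual}) \cap V \cap W$ with $\Vert u - v \Vert > 3\eps$; since $u, v$ lie in $W$, the inductive hypothesis places $Tu, Tv \in s_\eps^\beta(\cball{c_0(R, C_0([0,\xi]))\dual}) \cap V'$.

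What remains---and this is where I expect the main difficulty---is the norm estimate $\Vert Tu - Tv \Vert > \eps$. Under the $\ell_1$ identification, a direct calculation gives
\[
\Vert u - v \Vert - \Vert Tu - Tv \Vert = \sum_{s \notin R}\Vert u_s - v_s\Vert + \sum_{r \in R}\sum_{\gamma \in [\xi,\alpha)}|u_r(\gamma) - v_r(\gamma)|,
\]
which by the triangle inequality is bounded above by $(\Vert u \Vert - \phi(u)) + (\Vert v \Vert - \phi(v))$. Since $u, v$ both lie in $W$ and in the unit ball, each parenthesised quantity is strictly less than $\eps$, whence $\Vert Tu - Tv \Vert > 3\eps - 2\eps = \eps$, as required. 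This shows $Tz \in s_\eps^{\beta+1}(\cball{c_0(R,C_0([0,\xi]))\dual})$, completing the induction. The slack factor of $3$ in the hypothesis is precisely what is needed to absorb the $2\eps$ of mass that $u$ and $v$ are permitted to carry outside $R \times [0,\xi)$: this mass may contribute up to $2\eps$ to $\Vert u - v\Vert$ while being annihilated by $T$.
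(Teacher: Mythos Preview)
Your proof is correct and follows essentially the same route as the paper's. The only cosmetic difference is that the paper argues the successor step by contrapositive (if $Tz\notin s_\eps^{\sigma+1}$ then $z\notin s_{3\eps}^{\sigma+1}$) and enforces the mass condition by shrinking the target open set to avoid $(1-\eps)\cball{}$, whereas you argue directly and enforce it via the $w^\ast$-lower-semicontinuous functional $\phi$; since $\phi(y)=\Vert Ty\Vert$, these two devices produce the same open neighbourhood, and the decisive $\ell_1$ estimate bounding $\Vert u-v\Vert-\Vert Tu-Tv\Vert$ by the leftover mass is identical in both arguments.
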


\begin{proof}
We proceed via transfinite induction on $\beta$. The assertion of the lemma is clearly true for $\beta=0$. Suppose that $\sigma$ is an ordinal such that the assertion of the lemma holds for all $\beta\leq \sigma$; we will show that the lemma holds also for $\beta=\sigma+1$. Let $(z_s)_{s\in S} \in \cball{c_0(S,\,C_0([0,\,\alpha]))\dual}$ be such that $\sum_{r\in R}\Vert J_{\xi,\,\alpha}\dual z_r \Vert>1-\eps$ and $(J_{\xi,\, \alpha}\dual z_r)_{r\in R}\notin s_\eps^{\sigma+1} (B_{c_0(R,\,C_0([0,\,\xi]))\dual})$. Since we intend to show that $(z_s)_{s\in S} \notin s_{3\eps}^{\sigma+1}(\cball{c_0(S,\,C_0([0,\,\alpha]))\dual})$, we may assume that $(z_s)_{s\in S} \in s_{3\eps}^{\sigma}(\cball{c_0(S,\,C_0([0,\,\alpha]))\dual})$, hence $(J_{\xi,\, \alpha}\dual z_r)_{r\in R}\in s_\eps^\sigma (B_{c_0(R,\,C_0([0,\,\xi]))\dual})$ by the induction hypothesis. So there is a $w\dual$-open subset $V$ of $c_0(R, \, C_0([0,\,\xi]))\dual$ containing $(J_{\xi,\, \alpha}\dual z_r)_{r\in R}$ and such that $\diam (V\cap s_\eps^\sigma(\cball{c_0(R,\, C_0([0,\,\xi]))\dual}))\leq \eps$. Since $\sum_{r\in R}\Vert J_{\xi,\,\alpha}\dual z_r \Vert>1-\eps$, we may assume that
\begin{equation}\label{doop}
V\cap (1-\eps)\cball{c_0(R,\,C_0([0,\,\xi]))\dual} =\emptyset\, .
\end{equation}
Define
\[
J: c_0(R,\, C_0([0,\,\xi]))\longrightarrow c_0(R,\,C_0([0,\,\alpha])): (x_r)_{r\in R}\mapsto (J_{\xi, \, \alpha}x_r)_{r\in R}\,,
\]
so that $U_RJ$ is an isometric linear embedding of $c_0(R, \, C_0([0,\,\xi]))$ into $c_0(S, \, C_0([0,\,\alpha]))$. Let $W=(J\dual U_R\dual)^{-1}(V)$, so that $W$ is a $w\dual$-open set containing $(z_s)_{s\in S}$, and let $(u_s)_{s\in S}\in W\cap s_{3\eps}^\sigma(\cball{c_0(S,\, C_0([0,\,\alpha]))\dual})$. Then $\sum_{r\in R}\Vert J_{\xi, \alpha}\dual u_r \Vert >1-\eps$ by (\ref{doop}) and $(u_s)_{s\in S}\in s_{3\eps}^\sigma(\cball{c_0(S,\, C_0([0,\,\alpha]))\dual})$ by assumption, hence $J\dual U_R\dual (u_s)_{s\in S}\in s_{\eps}^\sigma (\cball{c_0(R, \, C_0([0,\,\xi]))\dual})$ by the induction hypothesis. Suppose $(u_s)_{s\in S}, \, (v_s)_{s\in S}\in W\cap s_{3\eps}^\sigma (\cball{c_0(S, \, C_0([0,\,\alpha]))\dual})$. Then \[ \Vert J\dual U_R\dual (u_s)_{s\in S} - J\dual U_R\dual (v_s)_{s\in S} \Vert \leq \diam (V\cap s_\eps^\sigma(\cball{c_0(R,\, C_0([0,\,\xi]))\dual}))\leq \eps \, .\] Moreover, since $\Vert J\dual U_R\dual(u_s)_{s\in S}\Vert >1-\eps$, we have
\[
\sum_{s\in S\setminus R}\Vert u_s\Vert +\sum_{r\in R}\Vert u_r|_{[\xi, \, \alpha)}\Vert <\eps\, ,
\]
and similarly,
\[
\sum_{s\in S\setminus R}\Vert v_s\Vert +\sum_{r\in R}\Vert v_r|_{[\xi, \, \alpha)}\Vert <\eps\, .
\]
It follows that
\begin{align*}
\Vert (u_s)_{s\in S}- (v_s)_{s\in S}\Vert 
&\leq \Vert J\dual U_R\dual (u_s)_{s\in S} - J\dual U_R\dual (v_s)_{s\in S} \Vert + \sum_{s\in S\setminus R}\Vert u_s-v_s\Vert +\sum_{r\in R}\Vert (u_r-v_r)|_{[\xi, \, \alpha)} \Vert\\
&\leq \eps +\eps +\eps = 3\eps\, .
\end{align*}
In particular, $\diam (W\cap s_{3\eps}^\sigma(\cball{c_0(S, \, C_0([0,\,\alpha]))\dual}))\leq 3\eps$, hence $(z_s)_{s\in S}\notin s_{3\eps}^{\sigma+1}(\cball{c_0(S, \, C_0([0,\,\alpha]))\dual})$. We have now shown that the assertion of the lemma passes to successor ordinals.

As the assertion of the lemma passes readily to limit ordinals, the proof is complete.
\end{proof}

We are now ready to determine upper estimates for the Szlenk indices of the Banach spaces $C_0([0,\,\omega^{\omega^\gamma}])$, where $\gamma$ is an arbitrary ordinal.

\begin{proposition}\label{build}
Let $\gamma$ be an ordinal and $0<n<\omega$. Then \[Sz(c_0(\omega^{\omega^\gamma},\,C_0([0,\,\omega^{\omega^\gamma n}])))\leq\omega^{\gamma+1}\, .\]
\end{proposition}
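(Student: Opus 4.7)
The plan is to proceed by transfinite induction on $\gamma$, using Proposition~\ref{upest} at the ordinal level $\omega^\gamma$ and invoking Lemma~\ref{HaLag} to push ``large-norm'' points surviving $\omega^\gamma$ iterations of $s_\eps$ into the dual ball of a function space on a strictly smaller ordinal interval, at which point the inductive hypothesis completes the argument.

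Before starting the induction I would reduce to the case $n=1$. By Lemma~\ref{pb}, for $n>1$ we have $C_0([0,\,\omega^{\omega^\gamma n}]) \approx c_0(\omega^{\omega^\gamma},\, C_0([0,\,\omega^{\omega^\gamma}]))$; combining this with the routine isomorphism $c_0(\omega^{\omega^\gamma},\, c_0(\omega^{\omega^\gamma},\, Y)) \approx c_0(\omega^{\omega^\gamma},\, Y)$ (induced by any bijection $\omega^{\omega^\gamma} \times \omega^{\omega^\gamma} \to \omega^{\omega^\gamma}$) collapses the statement for general $n$ into the case $n=1$. The base case $\gamma=0$ is then $c_0(\omega,\, C_0([0,\,\omega])) \approx c_0$, of Szlenk index $\omega = \omega^{0+1}$.

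For the inductive step, fix $\gamma > 0$ and set $X = c_0(\omega^{\omega^\gamma},\, C_0([0,\,\omega^{\omega^\gamma}]))$, $\alpha = \omega^{\omega^\gamma}$. I would verify the hypothesis of Proposition~\ref{upest} at $\eta = \omega^\gamma$ by showing $s_\eps^{\omega^\gamma}(\cball{X\dual}) \subseteq (1 - \eps/3)\cball{X\dual}$ for each $\eps > 0$. Assume for contradiction that $(z_s) \in s_\eps^{\omega^\gamma}(\cball{X\dual})$ satisfies $\sum_s \Vert z_s\Vert > 1 - \eps/3$. Convergence of this sum lets me choose a finite $R \subseteq \omega^{\omega^\gamma}$ with $\sum_{r \in R} \Vert z_r\Vert$ close to the full sum, and convergence of each $\Vert z_r\Vert = \sum_{\zeta < \alpha} \abs{z_r(\zeta)}$ together with the finiteness of $R$ then provides a single $\xi < \alpha$ with $\sum_{r \in R} \Vert J_{\xi,\,\alpha}\dual z_r\Vert > 1 - \eps/3$. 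Lemma~\ref{HaLag} (applied with its $\eps$ replaced by $\eps/3$) then yields $(J_{\xi,\,\alpha}\dual z_r)_{r \in R} \in s_{\eps/3}^{\omega^\gamma}(\cball{c_0(R,\,C_0([0,\,\xi]))\dual})$. Since $\xi < \omega^{\omega^\gamma}$, a case-split on $\gamma$ successor vs.\ limit locates some $\gamma' < \gamma$ and $m < \omega$ with $\xi < \omega^{\omega^{\gamma'} m}$; the inductive hypothesis at $\gamma'$ together with Proposition~\ref{collection}(i) gives $Sz(C_0([0,\,\xi])) \leq \omega^\gamma$, and since $R$ is finite the same bound transfers to $c_0(R,\,C_0([0,\,\xi]))$, forcing $s_{\eps/3}^{\omega^\gamma}(\cball{c_0(R,\,C_0([0,\,\xi]))\dual}) = \emptyset$. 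This contradicts the preceding display, so Proposition~\ref{upest} delivers $Sz(X) \leq \omega^\gamma \cdot \omega = \omega^{\gamma+1}$.

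The main obstacle I anticipate is the choice of $\xi$: when $\omega^{\omega^\gamma}$ has countable cofinality (in particular for successor $\gamma$) the supports of the $z_r$ can be cofinal in $[0,\,\omega^{\omega^\gamma})$, so $\xi$ cannot be chosen to dominate them; one must instead exploit the $\ell_1$-summability of each $z_r$ together with the finiteness of $R$ to retain enough mass while keeping $\xi$ strictly below $\omega^{\omega^\gamma}$. The secondary bookkeeping---verifying $Sz(C_0([0,\,\xi])) \leq \omega^\gamma$ via the successor/limit split on $\gamma$ and the passage from the $c_0$-sum bound in the inductive hypothesis to a subspace bound on $C_0([0,\,\xi])$---is routine but must be carried out carefully.
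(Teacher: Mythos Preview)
Your proposal is correct and follows essentially the same route as the paper: transfinite induction on $\gamma$, with Proposition~\ref{upest} applied at $\eta=\omega^\gamma$ and Lemma~\ref{HaLag} used to push a large-norm survivor down to a $c_0$-sum over a strictly smaller ordinal interval where the inductive hypothesis applies. The only organizational differences are cosmetic---you reduce to $n=1$ once at the outset via Lemma~\ref{pb} (the paper interleaves this step into the induction), you dispatch the base case by identifying $c_0(\omega,\,C_0([0,\,\omega]))$ with $c_0$ (the paper instead runs the same Lemma~\ref{HaLag} argument at level $\eta=1$ down to a finite-dimensional space), and you choose a generic $\xi<\omega^{\omega^\gamma}$ and then bound it by some $\omega^{\omega^{\gamma'}m}$, whereas the paper takes $\xi$ of that form directly.
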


\begin{proof}
We proceed by induction on $\gamma$, first establishing the proposition in the case $\gamma=0$ and $n=1$. By Proposition~\ref{upest}, it suffices to show that
\[
\forall \eps>0 \quad s_\eps (\cball{c_0(\omega,\,C_0([0,\,\omega]))\dual})\subseteq \Big(1-\frac{\eps}{3}\Big)\cball{c_0(\omega,\,C_0([0,\,\omega]))\dual}\, .
\]
Suppose by way of contraposition that there is $\eps>0$ and $(z_l)_{l<\omega}\in s_\eps (\cball{c_0(\omega,\,C_0([0,\,\omega]))\dual})$ such that $\Vert (z_l)_{l<\omega}\Vert >1-\eps/3$. Since
\[
\Vert (z_l)_{l<\omega}\Vert = \sup \Big\{ \sum_{r\in R}\Vert J_{m,\,\omega}\dual z_r\Vert \,\, \Big\vert \,\, 0<m<\omega, \, R\subseteq \omega, \, 0<\vert R\vert <\infty \Big\}\, ,
\]
there exists $m<\omega$ and a nonempty finite set $R\subseteq \omega$ such that
\[
\sum_{r\in R}\Vert J_{m,\,\omega}\dual z_r\Vert >1-\frac{\eps}{3}\, .
\]
By Lemma~\ref{HaLag}, this implies that $(J_{m,\,\omega}\dual z_r)_{r\in R}\in s_{\eps/3}(\cball{c_0(R, \, C_0([0,\,m]))\dual})$, hence $Sz(c_0(R, \, C_0([0,\,m])))>1$. By Proposition~\ref{collection}(iii), this in turn implies that $c_0(R, \, C_0([0,\,m]))$ is infinite dimensional; however, this is impossible since $\dim (c_0(R, \, C_0([0,\,m])))=m\vert R\vert<\infty$. With this contradiction we have now established the assertion of the proposition for $\gamma=0$ and $n=1$.

Next we show that if $\beta$ is an ordinal such that the assertion of the proposition holds for $\gamma=\beta$ and $n=1$, then the proposition is true for $\gamma=\beta$ and all $0<n<\omega$. Let $1<m<\omega$ and note that, by Lemma~\ref{pb}, for any ordinal $\beta$ we have
\[
c_0(\omega^{\omega^\beta}, \, C_0([0,\,\omega^{\omega^\beta m}]))\approx c_0(\omega^{\omega^\beta}, c_0(\omega^{\omega^\beta}, \, C_0([0,\,\omega^{\omega^\beta}])))\approx c_0(\omega^{\omega^\beta}, \, C_0([0,\,\omega^{\omega^\beta}]))\, .
\]
Assuming the proposition is true for $n=1$ and $\gamma=\beta$, we deduce that
\[
Sz(c_0(\omega^{\omega^\beta}, \, C_0([0,\,\omega^{\omega^\beta m}]))) = Sz(c_0(\omega^{\omega^\beta}, \, C_0([0,\,\omega^{\omega^\beta}]))) \leq \omega^{\beta+1}\, ,
\]
as desired.

It now remains to show that if $\beta$ is an ordinal such that the assertion of the proposition holds for all $\gamma<\beta$ and $0<n<\omega$, then the assertion of the proposition holds for $n=1$ and $\gamma=\beta$. Take such $\beta$ and note that, by Proposition~\ref{upest}, it suffices to show that
\begin{equation}\label{eq2}
\forall\eps>0 \quad s_\eps^{\omega^\beta}(\cball{c_0(\omega^{\omega^\beta}, \, C_0([0,\,\omega^{\omega^\beta}]))\dual}) \subseteq \Big(1-\frac{\eps}{3}\Big)\cball{c_0(\omega^{\omega^\beta},\, C_0([0,\,\omega^{\omega^\beta}]))\dual}\, .
\end{equation}
Suppose by way of contraposition that there is $\eps>0$ and $(z_\eta)_{\eta< \omega^{\omega^\beta}}\in s_\eps^{\omega^\beta}(\cball{c_0(\omega^{\omega^\beta}, \, C_0([0,\,\omega^{\omega^\beta}]))\dual})$ with $\Vert (z_\eta)_{\eta< \omega^{\omega^\beta}}\Vert >1-\eps/3$. Since
\[
\Vert (z_\eta)_{\eta< \omega^{\omega^\beta}}\Vert = \sup \Big\{ \sum_{r\in R}\Vert J_{\omega^{\omega^\zeta m}, \, \omega^{\omega^\beta}}\dual z_r\Vert \,\, \Big\vert \,\, \zeta <\beta , \, 0<m<\omega, \, R\subseteq \omega^{\omega^\beta}, \, 0<\vert R\vert <\infty\Big\}\, ,
\]
there exists $\zeta<\beta$, $0<m<\omega$ and a nonempty finite set $R\subseteq \omega^{\omega^\beta}$ such that
\[
\sum_{r\in R}\Vert J_{\omega^{\omega^\zeta m},\, \omega^{\omega^\beta}}\dual z_r\Vert >1-\eps/3\, .
\]
By Lemma~\ref{HaLag}, this implies that $(J_{\omega^{\omega^\zeta m},\, \omega^{\omega^\beta}}\dual z_r)_{r\in R}\in s_{\eps/3}^{\omega^\beta}(\cball{c_0(R, \, C_0([0,\,\omega^{\omega^\zeta m}]))\dual})$, hence $Sz(c_0(R, \, C_0([0,\,\omega^{\omega^\zeta m}]))) > \omega^\beta$. By the induction hypothesis, it follows that
\[
\omega^{\beta}< Sz(c_0(R, \, C_0([0,\,\omega^{\omega^\zeta m}])))\leq \omega^{\zeta+1}\leq \omega^\beta\, ,
\]
which is absurd. Thus (\ref{eq2}) holds, and the assertion of the proposition holds for $n=1$ and $\gamma=\beta$. The inductive proof is now complete.
\end{proof}

\begin{theorem}\label{stpip}
Let $\alpha\geq \omega$ and let $\gamma$ be the unique ordinal satisfying $\omega^{\omega^\gamma}\leq \alpha < \omega^{\omega^{\gamma+1}}$. Then \[ Sz(C([0,\,\alpha]))=\omega^{\gamma+1}\,.\]
\end{theorem}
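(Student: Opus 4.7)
The plan is to prove the two inequalities $Sz(C([0,\alpha]))\leq\omega^{\gamma+1}$ and $Sz(C([0,\alpha]))\geq\omega^{\gamma+1}$ separately. For the upper bound, I would use the identity $\omega^{\omega^{\gamma+1}}=\sup_{n<\omega}\omega^{\omega^\gamma n}$ to fix an integer $n\geq 2$ with $\alpha<\omega^{\omega^\gamma n}$. The isometric embedding $J_{\alpha,\omega^{\omega^\gamma n}}$ realises $C([0,\alpha])\approx C_0([0,\alpha])$ as a subspace of $C_0([0,\omega^{\omega^\gamma n}])$, which Lemma~\ref{pb} identifies up to isomorphism with $c_0(\omega^{\omega^\gamma},C_0([0,\omega^{\omega^\gamma}]))$. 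Proposition~\ref{build} (applied with $n=1$) caps the Szlenk index of the latter by $\omega^{\gamma+1}$, and Proposition~\ref{collection}(i) propagates this bound back to $C([0,\alpha])$.

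For the lower bound, the embedding $J_{\omega^{\omega^\gamma},\alpha}$ combined with Proposition~\ref{collection}(i) reduces the task to showing $Sz(C_0([0,\omega^{\omega^\gamma}]))\geq\omega^{\gamma+1}$; by Proposition~\ref{collection}(ii) it is then enough to produce, for some $\eps>0$, an element of $s_\eps^{\omega^\gamma}(\cball{C_0([0,\omega^{\omega^\gamma}])\dual})$. I would construct such a witness by transfinite induction on $\gamma$. The base case $\gamma=0$ reduces to $Sz(c_0)\geq\omega$, which is immediate from parts~(ii) and~(iii) of Proposition~\ref{collection} since $c_0$ is infinite dimensional. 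At a successor $\gamma=\beta+1$, Lemma~\ref{bp} iterated (or combined with Lemma~\ref{pb}) presents $C_0([0,\omega^{\omega^{\beta+1}}])$ as containing, for each $n$, an isomorphic copy of $c_0(\omega^{\omega^\beta},C_0([0,\omega^{\omega^\beta}]))$ whose $\omega^{\omega^\beta}$ many coordinates consist of disjointly supported copies of $C_0([0,\omega^{\omega^\beta}])$; feeding the inductive witness of level $\omega^\beta$ into each coordinate and taking a $w^\ast$-cluster point along the $c_0$-direction (in the spirit of, but converse to, Lemma~\ref{HaLag}) should upgrade the derivative level from $\omega^\beta$ to $\omega^{\beta+1}$, while preserving a uniform slack $\eps$. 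At a limit $\gamma$ one glues witnesses of levels $\omega^{\beta_\delta}$ across a cofinal sequence $\beta_\delta\nearrow\gamma$ using the disjoint-support decomposition of Lemma~\ref{bp}.

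The main obstacle will be the successor step. Since the Szlenk index of $c_0(\Gamma,X)$ coincides with that of $X$ whenever $X$ is infinite dimensional, no extra power of $\omega$ can be extracted purely from the $c_0$-direction; the gain must come from interleaving that direction with the tree structure already present in each summand, and uniform control of the scale $\eps$ across the hierarchy is essential for the invocation of Proposition~\ref{collection}(ii) at each level to remain valid. A secondary difficulty appears at limits of uncountable cofinality, where the separable reduction strategy underlying Lancien's extension of Samuel's theorem no longer suffices and the witness must be constructed directly inside the non-separable dual $\ell_1([0,\omega^{\omega^\gamma}))$.
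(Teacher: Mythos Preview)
Your upper bound is essentially the paper's argument: choose $n$ with $\alpha<\omega^{\omega^\gamma n}$, embed $C([0,\alpha])$ into $C_0([0,\omega^{\omega^\gamma n}])$, and invoke Proposition~\ref{build} together with Proposition~\ref{collection}(i). (The paper embeds one step further into $c_0(\omega^{\omega^\gamma},C_0([0,\omega^{\omega^\gamma n}]))$ and applies Proposition~\ref{build} with that value of $n$, rather than first using Lemma~\ref{pb} to reduce to $n=1$; the difference is cosmetic.)

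Your lower-bound plan, however, is far more elaborate than needed and, as you yourself note, has unresolved obstacles at both the successor step and at limits of uncountable cofinality. The paper sidesteps all of this with a direct observation about point-evaluation functionals: the map $\xi\mapsto\delta_\xi$ is an order-to-$w^\ast$ homeomorphic embedding of $[0,\alpha]$ into $\cball{C([0,\alpha])\dual}$, and distinct Dirac functionals are at mutual norm distance $2$. A straightforward transfinite induction on $\zeta$ then shows that $\delta_{\omega^\zeta}\in s_1^{\zeta}(\cball{C([0,\alpha])\dual})$ whenever $\omega^\zeta\leq\alpha$ (at the successor step, $\omega^{\zeta+1}$ is the $w^\ast$-limit of the sequence $\delta_{\omega^\zeta k}$, each term of which lies in $s_1^\zeta$ by the same reasoning, and all are at distance $2$ from $\delta_{\omega^{\zeta+1}}$). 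Taking $\zeta=\omega^\gamma$ gives $s_1^{\omega^\gamma}(\cball{C([0,\alpha])\dual})\ni\delta_{\omega^{\omega^\gamma}}\neq\emptyset$, so $Sz(C([0,\alpha]))>\omega^\gamma$, and Proposition~\ref{collection}(ii) finishes. There is no need for the decomposition Lemmas~\ref{bp} and~\ref{pb}, no $c_0$-sum gymnastics, and no separate treatment of any cofinality: the Cantor--Bendixson structure of $[0,\alpha]$ already carries the witnesses, uniformly at scale $\eps=1$.
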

\begin{proof}
Let $n<\omega$ be such that $\omega^{\omega^\gamma n}>\alpha$, so that $C([0,\,\alpha])$ is isomorphic to a subspace of $C_0([0,\,\omega^{\omega^\gamma n}])$, hence isomorphic to a subspace of $c_0(\omega^{\omega^\gamma},\,C_0([0,\,\omega^{\omega^\gamma n}]))$. Then, by Proposition~\ref{collection}(i) and Proposition~\ref{build},
\begin{align}\label{prooof}
Sz(C([0,\,\alpha]))\leq Sz(c_0(\omega^{\omega^\gamma},\,C_0([0,\,\omega^{\omega^\gamma n}])))\leq \omega^{\gamma+1}.
\end{align}
To obtain the reverse inequality, we consider the functionals $\delta_\xi\in \cball{C([0,\,\alpha])\dual}$, $\xi\leq \alpha$, where $\langle \delta_\xi, \, f\rangle = f(\xi)$ for each $f\in C([0,\,\alpha])$. As the map $[0,\,\alpha]\longrightarrow C([0,\,\alpha])\dual$ is an order-$w\dual$ homeomorphic embedding, a straightforward induction shows that $\delta_{\omega^\zeta}\in s_1^{\zeta}(\cball{C([0,\,\alpha])\dual})$ whenever $\omega^\zeta \leq \alpha$. In particular, $s_1^{\omega^\gamma}(\cball{C([0,\,\alpha])\dual})\ni \delta_{\omega^{\omega^\gamma}}$ is nonempty, hence $Sz(C([0,\,\alpha]))>\omega^\gamma$. By Proposition~\ref{collection}(ii), $Sz(C([0,\,\alpha]))\geq\omega^{\gamma+1}$, and we are done.
\end{proof}

\section{The $w^\ast$-dentability index of $C([0,\,\alpha])$}\label{secthree}

In this section we discuss the $w^\ast$-dentability indices of the spaces $C([0,\,\alpha])$, where $\alpha$ is an arbitrary ordinal. For a (real) Asplund space $X$, the definitions of the $\eps$-$w^\ast$-dentability index $Dz(X,\,\eps)$ of $X$ and the $w^\ast$-dentability index $Dz(X)$ of $X$ are essentially the same as for the Szlenk indices $Sz(X, \, \eps)$ and $Sz(X)$, the difference being that in the derivation on $w^\ast$-compact sets we remove only $w^\ast$-slices of small norm diameter (for $x\in X$ and $t\in \mathbb{R}$, let $H(x,\,t)= \{x\dual \in X\dual \mid x\dual (x)>t\}$; for $B\subseteq X\dual$, a $w\dual$-\emph{slice of} $B$ is any set of the form $H(x,\,t)\cap B$, where $x\in X$ and $t\in \mathbb{R}$.) To be precise, let $X$ be an Asplund space and $B \subseteq X\dual$. Define
\begin{equation*}
d_\eps(B) = \{x\dual \in B \mid \diam (B \cap H(x,\,t))> \eps \mbox{ for every } w\dual \mbox{-slice }H(x,\,t)\ni x\dual\}\,.
\end{equation*} We iterate $d_\eps$ transfinitely, setting $d_\eps^0(B) = B$, $ d_\eps^{\beta+1}(B)= d_\eps(d_\eps^\beta(B))$ for each ordinal $\beta$ and $ d_\eps^\beta(B) = \bigcap_{\sigma< \beta} d_\eps^\sigma(B)$ whenever $\beta $ is a limit ordinal. 

Define ${Dz}(X,\,\eps)$ to be the first ordinal $\beta$ such that $d_\eps^\beta(\cball{X\dual}) = \emptyset$, and $Dz(X) :=\sup_{\eps >0}{Dz}(X,\,\eps)$. Similarly to the Szlenk index, the $w^\ast$-dentability index ${Dz}(X)$ is defined for every Asplund space $X$.

The natural analogues of parts (i) and (ii) of Proposition~\ref{collection} hold for the $w^\ast$-dentability index, with similar proofs. In particular, $Dz(X)\leq Dz(Y)$ whenever $X$ is a subspace of $Y$, and $Dz(X)>\omega^\gamma$ implies $Dz(X)\geq \omega^{\gamma+1}$. For part (iii), the analogous result for the $w^\ast$-dentability index is that $Dz(X)\leq \omega$ if and only if $X$ is superreflexive; this is due to Lancien \cite{Lancien1995}. Moreover, it is clear that ${Sz}(X)\leq {Dz}(X)$ for all Asplund spaces $X$; conversely, we have the following:

\begin{proposition}[\cite{Lancien2006}]\label{bochnerest}
Let $X$ be an Asplund space and $L_2(X)$ the Banach space of all (equivalence classes of) Bochner integrable functions $f:[0,\,1]\longrightarrow X$, equipped with its usual norm. Then \[  Dz(X)\leq Sz(L_2(X))\,. \]
\end{proposition}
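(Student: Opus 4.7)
The plan is to compare the two derivation processes through an isometric embedding of $X\dual$ into $L_2(X\dual)$. Since $X$ is Asplund, $X\dual$ has the Radon-Nikod\'ym property, so $L_2(X)\dual$ is canonically identified with $L_2(X\dual)$ via the pairing $\langle g,f\rangle=\int_0^1 g(s)(f(s))\,ds$. Let $\phi\colon X\dual\to L_2(X\dual)$ be the isometric linear embedding sending $x\dual$ to the constant function with value $x\dual$.

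The goal is to prove, by transfinite induction on $\beta$, that for every $\eps>0$ there is $\eta(\eps)>0$ (independent of $\beta$) such that $\phi(d_\eps^\beta(\cball{X\dual}))\subseteq s_{\eta(\eps)}^\beta(\cball{L_2(X\dual)})$. Once this is in hand, $d_\eps^\beta(\cball{X\dual})=\emptyset$ whenever $s_{\eta(\eps)}^\beta(\cball{L_2(X\dual)})=\emptyset$, which gives $Dz(X,\eps)\leq Sz(L_2(X),\eta(\eps))$; taking the supremum over $\eps>0$ then yields $Dz(X)\leq Sz(L_2(X))$. The base case $\beta=0$ is immediate and limit ordinals pose no difficulty, so all the work is in the successor step.

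For the successor step I take $x\dual\in d_\eps^{\beta+1}(\cball{X\dual})$ and a basic $w\dual$-open neighbourhood $V$ of $\phi(x\dual)$ determined by functions $f_1,\ldots,f_n\in L_2(X)$ and a constant $\delta>0$; I must produce $g^1,g^2\in V\cap s_{\eta(\eps)}^\beta(\cball{L_2(X\dual)})$ with $\|g^1-g^2\|_{L_2}>\eta(\eps)$. The construction is to pick a measurable $A\subseteq[0,1]$ and a $w\dual$-slice $H$ of $\cball{X\dual}$ through $x\dual$ whose direction is compatible with the vectors $\int_A f_i\,ds\in X$, then select $u_1\dual,u_2\dual\in H\cap d_\eps^\beta(\cball{X\dual})$ with $\|u_1\dual-u_2\dual\|>\eps$, and finally form the step functions $g^k=u_k\dual\chi_A+x\dual\chi_{[0,1]\setminus A}$. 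These lie in $s_{\eta(\eps)}^\beta(\cball{L_2(X\dual)})$ by a strengthening of the inductive hypothesis that treats arbitrary step functions taking values in $d_\eps^\beta(\cball{X\dual})$.

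The principal obstacle is harmonising two competing requirements: the perturbation $u_k\dual-x\dual$ must act by a small scalar on each of the vectors $\int_A f_i$ so that $g^k\in V$, which pushes $|A|$ to be small, while the $L_2$-distance $\|g^1-g^2\|_{L_2}=|A|^{1/2}\|u_1\dual-u_2\dual\|$ must be bounded below by $\eta(\eps)$ independently of $\delta$ and the $f_i$. The naive single-cell approach produces an $\eta$ depending on $V$, which is useless. The proper resolution, in the spirit of the techniques of \cite{H'ajek2009}, replaces single-cell perturbations by dyadic superpositions over a fine partition of $[0,1]$, exploiting cancellations in the pairings $\langle g^k,f_i\rangle$ to achieve a bound on the $L_2$-distance that is uniform in $\delta$; this is the essential technical step of the argument.
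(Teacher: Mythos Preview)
The paper does not prove this proposition; it is quoted from Lancien's survey \cite{Lancien2006} and used as a black box in Section~\ref{secthree}. There is therefore no proof in the paper to compare your argument against.

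As for your sketch on its own merits: the framework is the standard one --- embed $X\dual$ as constant functions in $L_2(X)\dual\cong L_2(X\dual)$ and run a transfinite induction comparing $d_\eps^\beta(\cball{X\dual})$ with $s_{\eta(\eps)}^\beta(\cball{L_2(X\dual)})$ --- and you correctly identify both the need to strengthen the inductive hypothesis to cover step functions with values in $d_\eps^\beta(\cball{X\dual})$ and the obstacle that a single-cell perturbation produces an $\eta$ depending on the neighbourhood $V$. However, the proposal stops exactly at the decisive point: you assert that a dyadic superposition yields a uniform $\eta(\eps)$ without carrying out the construction. Until that step is written out --- including the precise strengthened statement to be proved by induction, the verification that the perturbed step functions genuinely lie in $s_{\eta(\eps)}^\beta(\cball{L_2(X\dual)})$, and the cancellation estimate showing the pairings $\langle g^k,f_i\rangle$ stay within $\delta$ while $\Vert g^1-g^2\Vert_{L_2}$ is bounded below independently of $V$ --- what you have is a plausible outline rather than a proof.
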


Proposition~\ref{bochnerest} was used in \cite{H'ajek2009} to show that for ordinals $\omega^{\omega^\gamma}\leq \alpha<\omega^{\omega^{\gamma+1}}<\omega_1$, the $w^\ast$-dentability index of $C([0,\,\alpha])$ is $\omega^{1+\gamma+1}$. The authors of \cite{H'ajek2009} then extend their result to a certain nonseparable setting by showing that for a scattered compact Hausdorff space $K$ of countable height, the $w^\ast$-dentability index of $C(K)$ is $\omega^{1+\gamma+1}$, where $\gamma$ is the unique (countable) ordinal such that the height of $K$ belongs to the ordinal interval $[\omega^\gamma, \, \omega^{\gamma+1})$. The following result extends the main result of \cite{H'ajek2009} in a different direction.

\begin{theorem}\label{dentthm}
Let $\alpha\geq \omega$ and let $\gamma$ be the unique ordinal satisfying $\omega^{\omega^\gamma}\leq \alpha < \omega^{\omega^{\gamma+1}}$. Then \[ Dz(C([0,\,\alpha]))=\omega^{1+\gamma+1}\,.\]
\end{theorem}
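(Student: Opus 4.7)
The strategy mirrors Theorem~\ref{stpip}, with Proposition~\ref{bochnerest} serving as the bridge from $Dz(C([0,\,\alpha]))$ to the Szlenk index of the Bochner space $L_2(C([0,\,\alpha]))$.

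\textbf{Lower bound.} I would split into cases according to whether $\gamma$ is finite. When $\gamma<\omega$ the ordinal $\alpha$ is countable, $C([0,\,\omega^{\omega^\gamma}])$ embeds isometrically in $C([0,\,\alpha])$, and the main result of \cite{H'ajek2009} gives $Dz(C([0,\,\omega^{\omega^\gamma}]))=\omega^{1+\gamma+1}$; the subspace monotonicity of $Dz$ then supplies the desired bound. When $\gamma\geq\omega$ one has $1+\gamma=\gamma$ and therefore $\omega^{1+\gamma+1}=\omega^{\gamma+1}$, so the chain $Dz(C([0,\,\alpha]))\geq Sz(C([0,\,\alpha]))=\omega^{\gamma+1}$ provided by Theorem~\ref{stpip} already suffices.

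\textbf{Upper bound.} By Proposition~\ref{bochnerest} it is enough to prove
\[
Sz(L_2(C([0,\,\alpha])))\leq \omega^{1+\gamma+1}.
\]
I would repeat the induction scheme of Proposition~\ref{build}, lifted to the Bochner level. The decompositions of Lemmas~\ref{bp} and \ref{pb} transfer essentially unchanged to spaces of the form $L_2(C_0([0,\,\cdot]))$, since $L_2(\cdot)$ commutes with finite direct sums and the natural $L_2$-counterpart of $c_0(S,\,\cdot)$ behaves the same way under the ordinal-arithmetic manipulations used there. The key technical device is the Szlenk-derivation analog of Lemma~\ref{HaLag} for $L_2(c_0(S,\,C_0([0,\,\alpha])))$: this is the main ingredient contributed by the techniques of \cite{H'ajek2009}, and it controls the behaviour of the Bochner lift of $J_{\xi,\,\alpha}\dual$ under iterated $s_\eps$-derivation. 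With this ingredient in hand, the induction on $\gamma$ runs verbatim, the only substantive difference being in the base case: whereas Proposition~\ref{build} begins from a finite-dimensional space (Szlenk index $1$), the Bochner-lifted induction begins from $L_2$ of a finite-dimensional space, which is superreflexive and therefore has Szlenk index $\omega$. This single unit of shift is precisely what promotes the final estimate from $\omega^{\gamma+1}$ to $\omega^{1+\gamma+1}$.

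\textbf{Main obstacle.} The hardest part is setting up and verifying the Bochner-level analog of Lemma~\ref{HaLag}: one must work via the duality $L_2(c_0(S,\,C_0([0,\,\alpha])))\dual \cong L_2([0,\,1],\,\ell_1(S,\,\ell_1([0,\,\alpha))))$ and carry the slice-diameter estimates of Lemma~\ref{HaLag} through the additional $L_2$-integration while preserving the $3\eps$-type control on norms. Once this analog is in place (as it is in \cite{H'ajek2009} for the countable-height setting), the extension to arbitrary, possibly uncountable, $\alpha$ is automatic, because Lemmas~\ref{bp} and \ref{pb} are statements of pure ordinal arithmetic and the induction on $\gamma$ is insensitive to cardinality.
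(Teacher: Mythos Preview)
Your proposal is correct and follows essentially the same route as the paper: the upper bound via Proposition~\ref{bochnerest} and a Bochner-level rerun of the induction in Proposition~\ref{build}, with the $L_2$-analog of Lemma~\ref{HaLag} (the paper's Lemma~3.4) as the key technical ingredient, and the lower bound split into the cases $\gamma<\omega$ (handled by \cite{H'ajek2009}) and $\gamma\geq\omega$ (handled by $Dz\geq Sz$ and Theorem~\ref{stpip}). Your identification of the base-case shift---from a finite-dimensional space to its superreflexive $L_2$-lift---as the source of the extra factor $\omega$ is exactly the mechanism at work.
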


We shall only sketch the proof of Theorem~\ref{dentthm}, as the differences between the proofs of Theorem~\ref{stpip} and Theorem~\ref{dentthm} are completely analogous to the differences between the proofs of the separable cases established in \cite{H'ajek2007} and \cite{H'ajek2009} (we note that although it is essentially possible to prove Theorem~\ref{stpip} and Theorem~\ref{dentthm} simultaneously by estimating the Szlenk index of $L_2(\mu, \, C([0,\,\alpha]))$, where $\mu$ is assumed to be either counting measure on a singleton or Lebesgue measure on $[0,\,1]$, respectively, we feel it would obscure the main ideas of the proof of Theorem~\ref{stpip} to do so). Theorem~\ref{dentthm} follows readily from the Szlenk index estimate given by the following result.

\begin{proposition}\label{dentdentest}
Let $\gamma$ be an ordinal and $0<n<\omega$. Then
\[
Sz(L_2(c_0(\omega^{\omega^\gamma},\,C([0,\,\omega^{\omega^\gamma n}])))) \leq \omega^{1+\gamma+1}\,.
\]
\end{proposition}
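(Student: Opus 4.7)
The plan is to mirror the three-part induction in the proof of Proposition~\ref{build}, replacing the Szlenk-derivation count $\omega^\gamma$ with $\omega^{1+\gamma}$ throughout and replacing Lemma~\ref{HaLag} with an $L_2$-Bochner analog. By Proposition~\ref{upest}, at each stage it will suffice to verify that for every $\eps>0$ there is $\delta(\eps)\in(0,1)$ with
\[
s_\eps^{\omega^{1+\gamma}}\bigl(\cball{L_2(c_0(\omega^{\omega^\gamma},C_0([0,\omega^{\omega^\gamma}])))\dual}\bigr)\subseteq (1-\delta(\eps))\cball{L_2(c_0(\omega^{\omega^\gamma},C_0([0,\omega^{\omega^\gamma}])))\dual}\,,
\]
which yields the desired bound $\omega^{1+\gamma}\cdot\omega=\omega^{1+\gamma+1}$.

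The first task is to establish an $L_2$-analog of Lemma~\ref{HaLag}. The operator $U_R J$ extends to an isometric embedding of $L_2(c_0(R,C_0([0,\xi])))$ into $L_2(c_0(S,C_0([0,\alpha])))$ whose adjoint $\Phi$ acts pointwise by dual restriction; since $\ell_1(S,\ell_1([0,\alpha)))$ has the Radon-Nikod\'ym property, the duals of the relevant Bochner spaces identify with $L_2([0,1],\ell_1(S,\ell_1([0,\alpha))))$ and similarly for the subspace. The analog states: if $f\in s_{3\eps}^\beta$ of the unit ball of the large dual and $\|\Phi f\|>1-\eps$, then $\Phi f\in s_\eps^\beta$ of the unit ball of the small dual. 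The proof parallels that of Lemma~\ref{HaLag} by transfinite induction, using $w\dual$-$w\dual$-continuity of $\Phi$ and controlling the $L_2$-masses of the excess over $S\setminus R$ and over $[\xi,\alpha)$.

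The three-part induction now proceeds as follows. \emph{Base case} $\gamma=0$, $n=1$: apply Proposition~\ref{upest} with $\eta=\omega$. Suppose toward contradiction some $f$ of norm exceeding $1-\eps/3$ lies in $s_\eps^\omega$ of the relevant ball; monotone convergence of the restriction norm as $R\nearrow\omega$ and $m\to\omega$ produces finite $R\subseteq\omega$ and $m<\omega$ whose restriction has norm exceeding $1-\eps/3$, so by the $L_2$-analog this restriction lies in $s_{\eps/3}^\omega$ of the unit ball of $L_2(c_0(R,C_0([0,m])))\dual$. But $c_0(R,C_0([0,m]))$ is finite-dimensional, making its Bochner space superreflexive with Szlenk index $\omega$, so the derivation is empty---contradiction. \emph{Extension to all $n$}: immediate from Lemma~\ref{pb} and functoriality of $L_2(\cdot)$. \emph{Inductive step} $\gamma=\beta$, $n=1$, assuming the result for all $\zeta<\beta$ and all $n$: apply Proposition~\ref{upest} with $\eta=\omega^{1+\beta}$. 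A concentration argument yields $\zeta<\beta$, $m<\omega$, and a finite nonempty $R\subseteq\omega^{\omega^\beta}$ such that the restriction lies in $s_{\eps/3}^{\omega^{1+\beta}}$ of the unit ball of $L_2(c_0(R,C_0([0,\omega^{\omega^\zeta m}])))\dual$; since finite $R$ embeds into $\omega^{\omega^\zeta}$, this space embeds into $L_2(c_0(\omega^{\omega^\zeta},C_0([0,\omega^{\omega^\zeta m}])))$, whose Szlenk index is at most $\omega^{1+\zeta+1}\leq\omega^{1+\beta}$ by induction, contradicting nonemptiness of the derivation.

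The main obstacle is the $L_2$-analog of Lemma~\ref{HaLag} itself: the successor step of its transfinite induction must control three error terms---the $w\dual$-slice diameter, the $L_2$-mass over $S\setminus R$, and the $L_2$-mass over $[\xi,\alpha)$---simultaneously under the integrated $L_2$-norm, which is more delicate than the pointwise $\ell_1$-bookkeeping of the Szlenk setting. In addition, the concentration supremum used in the base and inductive steps must be justified in the $L_2$ setting by approximating elements of the large $L_2$-dual by step functions whose values have finite coordinate support, replacing the trivial supremum formula available for $\ell_1$-valued dual elements in the Szlenk argument.
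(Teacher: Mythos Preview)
Your overall strategy matches the paper's sketch exactly: a three-stage induction paralleling Proposition~\ref{build}, with Lemma~\ref{HaLag} replaced by an $L_2$-Bochner analog and the derivation count $\omega^\gamma$ replaced by $\omega^{1+\gamma}$. The identification of $L_2(X)\dual$ with $L_2(X\dual)$ via the Radon--Nikod\'ym property of $\ell_1$, the use of superreflexivity of $L_2(\text{finite-dimensional})$ in the base case, and the reduction via Lemma~\ref{pb} are all correct.

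There is, however, a concrete quantitative gap in your formulation of the $L_2$-analog of Lemma~\ref{HaLag}. You state the hypothesis as $\|\Phi f\|>1-\eps$ and the conclusion as $\Phi f\in s_\eps^\beta$, with the ambient derivation taken at level $3\eps$. But for $z\in\cball{L_2(X\dual)}$ the pointwise $\ell_1$-splitting $\|z(t)\|=\|\Phi z(t)\|+\|r(t)\|$ yields, after integrating, only
\[
1\geq\|z\|_{L_2}^2=\int\bigl(\|\Phi z(t)\|+\|r(t)\|\bigr)^2\,dt\geq\|\Phi z\|_{L_2}^2+\|r\|_{L_2}^2\,,
\]
so $\|\Phi z\|>1-\eps$ gives only $\|r\|_{L_2}<\sqrt{2\eps-\eps^2}$, not $\|r\|_{L_2}<\eps$. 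In the successor step the diameter estimate then becomes $\eps+2\sqrt{2\eps-\eps^2}$, which is not $\leq 3\eps$ for small $\eps$, and the induction does not close. The paper repairs this precisely by taking the hypothesis to be $\|J\dual z\|^2>1-\eps^2$, which forces $\|r\|_{L_2}<\eps$ and restores the $3\eps$ bound. With that single change (and the corresponding adjustment $\delta(\eps)=1-\sqrt{1-\eps^2/9}$ in the application of Proposition~\ref{upest}), your argument goes through.
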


The main difficulty in establishing Proposition~\ref{dentdentest} is to prove the following variant of Proposition~\ref{HaLag}; the proof combines ideas from the proofs of Proposition~\ref{dentdentest} and \cite[Lemma~6]{H'ajek2009}

\begin{lemma}
Let $0<n<\omega$ and let $\zeta$ and $\gamma$ be ordinals satisfying either $\zeta=\gamma=0$ or $\omega^{\omega^\zeta n}<\omega^{\omega^\gamma}$. Let $\emptyset \subsetneq R\subseteq \omega^{\omega^\gamma}$, $\eps >0$ and let $J$ denote the canonical embedding of $L_2(c_0(R,\,C([0,\,\omega^{\omega^\zeta n}])))$ into $L_2(c_0(\omega^{\omega^\gamma},\,C([0,\,\omega^{\omega^\gamma}])))$. Let $\beta$ be an ordinal. If $z\in s_{3\eps}^\beta(B_{L_2(c_0(\omega^{\omega^\gamma},\,C([0,\,\omega^{\omega^\gamma}])))^\ast})$ and $\Vert J^\ast z\Vert^2 >1-\eps^2 $, then $J^\ast z\in s_\eps^\beta (B_{L_2(c_0(R,\,C([0,\,\omega^{\omega^\zeta n}])))^\ast})$.
\end{lemma}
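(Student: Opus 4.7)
The plan is to argue by transfinite induction on $\beta$, following the template of Lemma~\ref{HaLag} but with two adjustments to accommodate the $L_2$-valued setting. The first is that the threshold on $\Vert J^\ast z\Vert$ is squared: the hypothesis $\Vert J^\ast z\Vert^2>1-\eps^2$ is precisely what is needed to convert the pointwise $\ell_1$-additive decomposition of the $c_0$-dual into an honest $L_2$ tail estimate after integrating over $[0,1]$. The second is that the three norm contributions which produced the $3\eps$ bound in Lemma~\ref{HaLag} are now assembled via Minkowski's inequality in $L_2$ rather than by a direct triangle inequality. The base case $\beta=0$ is trivial and the limit case is immediate from $s_\eps^\beta(B)=\bigcap_{\sigma<\beta}s_\eps^\sigma(B)$, so the whole argument reduces to the successor step.

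For the successor step, I would assume the lemma holds at $\beta=\sigma$ and argue by contraposition: fix $z\in B_{L_2(Y^\ast)}$ with $\Vert J^\ast z\Vert^2>1-\eps^2$ and $J^\ast z\notin s_\eps^{\sigma+1}(B_{L_2(X^\ast)})$, where $X=c_0(R,C([0,\omega^{\omega^\zeta n}]))$ and $Y=c_0(\omega^{\omega^\gamma},C([0,\omega^{\omega^\gamma}]))$, and aim to conclude $z\notin s_{3\eps}^{\sigma+1}(B_{L_2(Y^\ast)})$. We may assume $z\in s_{3\eps}^\sigma(B_{L_2(Y^\ast)})$, whence the inductive hypothesis places $J^\ast z\in s_\eps^\sigma(B_{L_2(X^\ast)})$. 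Pick a $w^\ast$-open $V\ni J^\ast z$ with $\diam\bigl(V\cap s_\eps^\sigma(B_{L_2(X^\ast)})\bigr)\leq\eps$; since $\Vert J^\ast z\Vert>\sqrt{1-\eps^2}$ and the ball $\sqrt{1-\eps^2}\,B_{L_2(X^\ast)}$ is $w^\ast$-closed, we may further shrink $V$ so that $V\cap\sqrt{1-\eps^2}\,B_{L_2(X^\ast)}=\emptyset$. Then $W=(J^\ast)^{-1}(V)$ is a $w^\ast$-open neighbourhood of $z$ whose trace on $s_{3\eps}^\sigma(B_{L_2(Y^\ast)})$ must be shown to have diameter at most $3\eps$.

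The crux is the pointwise-in-$t$ $\ell_1$-additive splitting $\Vert u(t)\Vert_{Y^\ast}=\Vert J^\ast u(t)\Vert_{X^\ast}+\Vert u^T(t)\Vert$, where the tail $u^T(t)$ collects the portion of $u(t)$ annihilated by $J^\ast$ -- namely, the coordinates of $u(t)$ indexed in $\omega^{\omega^\gamma}\setminus R$ together with the restriction to $(\omega^{\omega^\zeta n},\omega^{\omega^\gamma}]$ of the coordinates in $R$. For any $u\in W\cap s_{3\eps}^\sigma(B_{L_2(Y^\ast)})$ the choice of $V$ forces $\Vert J^\ast u\Vert^2>1-\eps^2$; squaring the splitting pointwise yields $\Vert u(t)\Vert^2\geq \Vert J^\ast u(t)\Vert^2+\Vert u^T(t)\Vert^2$, and integrating gives $\Vert u^T\Vert_{L_2}^2\leq 1-(1-\eps^2)=\eps^2$, with the same bound for any $v\in W\cap s_{3\eps}^\sigma$. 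The inductive hypothesis applied to $u$ and $v$ places $J^\ast u,J^\ast v\in V\cap s_\eps^\sigma(B_{L_2(X^\ast)})$, so $\Vert J^\ast u-J^\ast v\Vert_{L_2}\leq\eps$; Minkowski in $L_2$ then gives $\Vert u-v\Vert_{L_2}\leq\Vert J^\ast u-J^\ast v\Vert_{L_2}+\Vert u^T\Vert_{L_2}+\Vert v^T\Vert_{L_2}\leq 3\eps$, yielding $z\notin s_{3\eps}^{\sigma+1}(B_{L_2(Y^\ast)})$ as required. The main technical obstacle is keeping the $\ell_1$-plus-$L_2$ bookkeeping clean, and recognising that the squared threshold $1-\eps^2$ is exactly what makes the tail estimate close at $3\eps$.
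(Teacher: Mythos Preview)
Your proposal is correct and follows precisely the approach the paper indicates: the paper does not give a detailed proof of this lemma, stating only that it ``combines ideas from the proofs of [Lemma~\ref{HaLag}] and \cite[Lemma~6]{H'ajek2009}'', and your argument does exactly that---the transfinite induction and contrapositive successor step are taken from Lemma~\ref{HaLag}, while the squared threshold $1-\eps^2$, the pointwise inequality $(a+b)^2\geq a^2+b^2$, and the final Minkowski estimate in $L_2$ are the modifications from \cite{H'ajek2009}. One small point worth making explicit: your pointwise additive splitting $\Vert u(t)\Vert_{Y^\ast}=\Vert (J^\ast u)(t)\Vert_{X^\ast}+\Vert u^T(t)\Vert$ relies on $j^\ast$ being a coordinate restriction in the underlying $\ell_1$-structure, which holds for the extend-by-zero embedding (the one used throughout Section~\ref{sectwo}); just be sure that ``canonical embedding'' is read this way.
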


The estimate $Dz(C([0,\,\alpha]))\leq \omega^{1+\gamma+1}$ follows readily from Proposition~\ref{bochnerest} and Proposition~\ref{dentdentest}. For the reverse inequality, note that for the case $\gamma\geq \omega$ we have \[ Dz(C([0,\,\alpha]))\geq Sz(C([0,\,\alpha])) > Sz(C([0,\,\alpha]), \, 1)\geq  \omega^{\gamma} =\omega^{1+\gamma}\,,\] so that the required estimate follows by the aforementioned $w^\ast$-dentability index version of Proposition~\ref{collection}(ii). The case $\gamma<\omega$ follows from the fact established in \cite[Proposition~11]{H'ajek2009} that $Dz(C([0,\,\omega^{\omega^\gamma}]),\,1/2)>\omega^{1+\gamma}$ for every $\gamma<\omega$.


\begin{thebibliography}{00}
\bibliographystyle{model1-num-names}

\bibitem{Alspach1979}
D.E. Alspach and Y.~Benyamini,
\newblock {$C(K)$} quotients of separable {${\cal L}\sb{\infty }$} spaces.
\newblock Israel J. Math. 32 (1979) 145--160.

\bibitem{Alspach1978}
D.E. Alspach,
\newblock Quotients of {$C[0,\,1]$} with separable dual.
\newblock Israel J. Math. 29 (1978) 361--384.

\bibitem{Benyamini1978}
Y.~Benyamini,
\newblock An extension theorem for separable {B}anach spaces.
\newblock Israel J. Math. 29 (1978) 24--30.

\bibitem{Bessaga1960}
C.~Bessaga and A.~Pe{\l}czy{\'n}ski,
\newblock Spaces of continuous functions. {IV}. {O}n isomorphical
  classification of spaces of continuous functions.
\newblock Studia Math. 19 (1960) 53--62.

\bibitem{Bourgain1979}
J.~Bourgain,
\newblock The {S}zlenk index and operators on {$C(K)$}-spaces.
\newblock Bull. Soc. Math. Belg. S\'er. B 31 (1979) 87--117.

\bibitem{Brookera}
P.~Brooker,
\newblock Direct sums and the {S}zlenk index.
\newblock J. Funct. Anal. 260 (2011) 2222--2246.

\bibitem{Deville1993}
R.~Deville, G.~Godefroy, and V.~Zizler,
\newblock Smoothness and renormings in {B}anach spaces, volume~64 of {\em
  Pitman Monographs and Surveys in Pure and Applied Mathematics}.
\newblock Longman Scientific \& Technical, Harlow, 1993.

\bibitem{Gasparis2005}
I.~Gasparis,
\newblock Operators on {$C(K)$} spaces preserving copies of {S}chreier spaces.
\newblock Trans. Amer. Math. Soc. 357 (2005) 1--30.

\bibitem{H'ajek2007}
P.~H{\'a}jek and G.~Lancien,
\newblock Various slicing indices on {B}anach spaces.
\newblock Mediterr. J. Math. 4 (2007) 179--190.

\bibitem{H'ajek2009}
P.~H{\'a}jek, G.~Lancien, and A.~Proch{\'a}zka,
\newblock Weak$\sp *$ dentability index of spaces ${C}([0, \, \alpha ])$.
\newblock J. Math. Anal. Appl. 353 (2009) 239--243.

\bibitem{H'ajek2008}
P.~H{\'a}jek, V.~Montesinos~Santaluc{\'{\i}}a, J.~Vanderwerff, and
  V.~Zizler,
\newblock Biorthogonal systems in {B}anach spaces.
\newblock CMS Books in Mathematics/Ouvrages de Math\'ematiques de la SMC, 26.
  Springer, New York, 2008.

\bibitem{Lancien1995}
G.~Lancien,
\newblock {On uniformly convex and uniformly Kadec-Klee renormings.}
\newblock Serdica Math. J. 21 (1995) 1--18.

\bibitem{Lancien1996}
G.~Lancien,
\newblock On the {S}zlenk index and the weak{$\sp *$}-dentability index.
\newblock Quart. J. Math. Oxford Ser. (2) 47 (1996) 59--71.

\bibitem{Lancien2006}
G.~Lancien,
\newblock A survey on the {S}zlenk index and some of its applications.
\newblock RACSAM Rev. R. Acad. Cienc. Exactas F\'\i s. Nat. Ser. A Mat.
  100 (2006) 209--235.

\bibitem{MS}
S.~Mazurkiewicz and W.~Sierpinski,
\newblock Contribution \`{a} la topologie des ensembles d\'{e}nombrables.
\newblock Fund. Math. 1 (1920) 17--27.

\bibitem{Namioka1975}
I.~Namioka and R.R.~Phelps,
\newblock Banach spaces which are {A}splund spaces.
\newblock Duke Math. J. 42 (1975) 735--750.

\bibitem{Rosenthal2003}
H.P.~Rosenthal,
\newblock The {B}anach spaces {$C(K)$},
\newblock in: W.B.~Johnson, J.~Lindenstrauss (Eds.), Handbook of the Geometry of Banach spaces, vol.\ 2, Elsevier, Amsterdam, 2003, pp.~1547--1602.

\bibitem{Rudin1957}
W.~Rudin,
\newblock Continuous functions on compact spaces without perfect subsets.
\newblock Proc. Amer. Math. Soc. 8 (1957) 39--42.

\bibitem{Samuel1984}
C.~Samuel,
\newblock Indice de {S}zlenk des {$C(K)$},
\newblock in: S\'{e}minaire de G\'{e}om\'{e}trie des Espaces de Banach, vols. I-II, Publications Math\'{e}matiques de l'Universit\'{e} Paris VII, Paris, 1983, pp. 81--91.

\bibitem{Semadeni1971}
Z.~Semadeni,
\newblock Banach spaces of continuous functions. {V}ol. {I}.
\newblock PWN---Polish Scientific Publishers, Warsaw, 1971.
\newblock Monografie Matematyczne, Tom 55.

\bibitem{Szlenk1968}
W.~Szlenk,
\newblock The non-existence of a separable reflexive {B}anach space universal
  for all separable reflexive {B}anach spaces.
\newblock Studia Math. 30 (1968) 53--61.

\end{thebibliography}
\end{document}